\newtheorem{theorem}{Theorem}[section]
\newtheorem*{theorem*}{Theorem}
\newtheorem{proposition}{Proposition}[section]
\newtheorem{lemma}{Lemma}[section]
\newtheorem{corollary}{Corollary}[section]
\theoremstyle{definition}
\newtheorem{example}{Example}[section]
\newtheorem{remark}{Remark}[section]
\titleformat*{\section}{\large\bfseries}
\titleformat*{\subsection}{\normalsize\bfseries}
\newcommand{\cd}{\mathcal{D}}
\newcommand{\cf}{\mathcal{F}}
\newcommand{\cg}{\mathcal{G}}
\newcommand{\ch}{\mathcal{H}}
\newcommand{\ci}{\mathcal{J}}
\newcommand{\cl}{\mathcal{L}}
\newcommand{\cn}{\mathcal{N}}
\newcommand{\co}{\mathcal{O}}
\newcommand{\cq}{\mathcal{Q}}
\newcommand{\cs}{\mathcal{S}}
\newcommand{\cz}{\mathcal{Z}}
\newcommand{\bp}{\mathbf{P}}
\newcommand{\bg}{\mathbf{G}}
\newcommand{\bc}{\mathbf{C}}
\newcommand{\bz}{\mathbf{Z}}
\newcommand{\fx}{\mathfrak{X}}
\newcommand{\E}{\mathrm{E}} 
\newcommand{\F}{\mathrm{F}} 
\newcommand{\G}{\mathrm{G}} 
\renewcommand{\H}{\mathrm{H}}
\newcommand{\HH}{\mathrm{HH}}
\renewcommand{\L}{\mathrm{L}}
\newcommand{\M}{\mathrm{M}}
\let\SS\S
\renewcommand{\S}{\mathrm{S}}
\newcommand{\T}{\mathrm{T}}
\newcommand{\V}{\mathrm{V}} 
\newcommand{\X}{\mathrm{X}} 
\newcommand{\Z}{\mathrm{Z}} 
\newcommand{\Aut}{\operatorname{Aut}}
\newcommand{\Ext}{\operatorname{Ext}}
\newcommand{\SExt}{\mathcal{E}xt}
\newcommand{\SEnd}{\mathcal{E}nd}
\newcommand{\Hilb}{\operatorname{Hilb}}
\newcommand{\End}{\operatorname{End}}
\newcommand{\Pic}{\operatorname{Pic}}
\newcommand{\Spec}{\operatorname{Spec}}
\newcommand{\SHom}{\mathcal{H}om}
\newcommand{\isom}{ \ \rotatebox[origin=c]{90}{$\thicksim$}}
\begin{document}
	
\title{\Large\textbf{Deformations of the Fano scheme of a cubic}}
\author{\normalsize SAMUEL STARK}
\date{}
\maketitle
	
\begin{abstract}
We study the deformation theory of the Fano scheme $\F=\F(\X)$ of lines on a cubic $\X$ of dimension $d$ 
with only finitely many singularities. By taking the relative Fano scheme, 
we define a morphism $\eta:\cd_{\X}\rightarrow\cd_{\F}$ of the local moduli functors associated to $\X$ and $\F$, respectively. 
We show that for $d\geqslant 5$, $\eta$ yields an isomorphism on first-order deformations; 
in particular, $\eta$ is an isomorphism whenever $\H^{0}(\Theta_{\X})=0$.
\end{abstract}

\section{Introduction}

Let $\bp$ be the complex projective space of dimension $d+1$, and $\X\subset\bp$ a cubic with a finite number of singularities. 
For $d\geqslant 3$, it is well-known that the geometry of $\X$ is largely determined by the Hilbert scheme $\F=\F(\X)$ of lines on $\X$, 
which is traditionally called the Fano scheme of $\X$. A great deal is known about $\F$ for $d=3$ or $d=4$ \cite{BeD, ClG, Fan, Huy, Weh}, 
and so our focus is on the $d\geqslant 5$ case, which has received much less attention. 
Altman and Kleiman \cite{AlK} show that $\F$ is an irreducible normal local complete intersection of dimension $2d-4$, 
and it is known that $\X$ can be recovered from $\F$ \cite{Cha}. 

In this paper, we relate the deformation theory of $\X$ to the deformation theory of $\F$. 
It is well-known that every infinitesimal deformation of $\X$ is given by a family of cubic hypersurfaces; 
by taking the relative Hilbert scheme, we define a morphism
\begin{equation*}
\eta:\cd_{\X}\rightarrow\cd_{\F}
\end{equation*}
of local moduli functors. A remarkable result of Beauville and Donagi \cite{BeD} asserts that if $\X$ is smooth of dimension $d=4$, 
the scheme $\F$ is deformation equivalent to the Hilbert scheme of two points of a K3 surface; in particular, 
there are deformations of $\F$ which are not induced by $\X$. In contrast, our main result is:
\begin{theorem*}\label{main}
Let $\X$ be cubic of dimension $d\geqslant 5$ having only finitely many singularities. The differential 
\begin{equation*}
d\eta:\Ext^{1}(\Omega^{1}_{\X}, \co_{\X})\rightarrow\Ext^{1}(\Omega^{1}_{\F}, \co_{\F})
\end{equation*}
of $\eta$ is an isomorphism. If $\H^{0}(\Theta_{\X})=0$, then $\eta$ is an isomorphism.
\end{theorem*}

Our proof relies on the standard description of $\F$ as a subscheme of the Grassmannian $\bg$ of lines in $\bp$. Parallel to $\eta$, there is a morphism
\begin{equation*}
\eta_{\ch}:\ch_{\X/\bp} \rightarrow \ch_{\F/\bg}
\end{equation*}
of local Hilbert functors, which is related to $\eta$ by a commutative square
\begin{equation*}
\begin{tikzcd}
\ch_{\X/\bp}  \arrow[r] \arrow[d, swap, "\eta_{\ch}"] & \cd_{\X}  \arrow[d, "\eta"]  \\
\ch_{\F/\bg} \arrow[r] & \cd_{\F},
\end{tikzcd}
\end{equation*}
where the horizontal morphisms are the forgetful ones. Consider the square
\begin{equation*}
\begin{tikzcd}
\H^{0}(\cn_{\X/\bp}) \arrow[r] \arrow[d, swap, "d\eta_{\ch}"] & \Ext^{1}(\Omega^{1}_{\X}, \co_{\X})  \arrow[d, "d\eta"]  \\
\H^{0}(\cn_{\F/\bg}) \arrow[r] & \Ext^{1}(\Omega^{1}_{\F}, \co_{\F}) 
\end{tikzcd}
\end{equation*}
of differentials. Relying on Borel-Bott-Weil computations and hypercohomology spectral sequences associated to the Koszul resolution of $\co_{\F}$, 
we show that the maps $\H^{0}(\cn_{\F/\bg})\rightarrow \Ext^{1}(\Omega^{1}_{\F}, \co_{\F})$ and $d\eta_{\ch}$ are surjective; we then observe that
\begin{equation*}
\dim \Ext^{1}(\Omega^{1}_{\F}, \co_{\F}) = \dim \Ext^{1}(\Omega^{1}_{\X}, \co_{\X}),
\end{equation*}
using a result of Charles \cite{Cha} which relates the automorphism group of $\F$ to the one of $\X$. 
The condition $\H^{0}(\Theta_{\X})=0$, which holds for example for Lefschetz cubics, then guarantees that both $\cd_{\X}$ and $\cd_{\F}$ are pro-representable. 
Without assuming $\H^{0}(\Theta_{\X})=0$, we show that $\eta_{\ch}$ is an isomorphism and $\eta$ is surjective. 

We should discuss the relation of our functorial approach to the work of Borcea \cite{Bor} and Wehler \cite{Weh}. 
Writing $\X=\Z(f)$ for $f\in\H^{0}(\co_{\bp}(3))$, Borcea \cite{Bor} considers the deformation of $\F$ given by varying $f$ in $\H^{0}(\co_{\bp}(3))$. 
He checks the conditions
\begin{equation}\label{eq:Wehler}
\H^{1}(\S^{3}\cs^{\vee}\otimes\ci_{\F/\bg})=0 \quad \mathrm{and} \quad \H^{1}(\Theta_{\bg}\vert_{\F})=0,
\end{equation}
which guarantee the completeness of the deformation \cite{Bor, Weh}, for $d\geqslant 6$. 
In contrast to his and other papers \cite{AlK, DeM} using similar methods, 
we explicitly compute the decomposition of the sheaves $\Lambda^{n}\S^{3}\cs$ (which occur in the Koszul resolution of $\ci_{\F/\bg}$) into Schur powers. 
This allows us to check the conditions (\ref{eq:Wehler}), which play an important role in our proof, for all $d\geqslant 5$, 
thus extending Borcea's result to $d=5$. In \Cref{hilbert}, we use the decomposition of $\Lambda^{n}\S^{3}\cs$ 
to express the Hilbert polynomial $\chi(\co_{\F}(n))$ of $\F$ in terms of the Pochhammer symbol; this generalises previous results of Altman and Kleiman \cite{AlK} and Libgober \cite{Lib}.

\section{Auxiliary results}

\subsection{The Borel-Bott-Weil theorem}

Let $\V$ be a complex vector space of dimension $d+2$. We write $\bp=\bp(\V)$ for the projective space of one-dimensional linear subspaces of $\V$, 
and $\bg=\mathrm{Gr}(2,\V)$ for the Grassmannian of lines. On $\bg$ there is a universal exact sequence
\begin{equation}\label{eq:universal}
0\rightarrow\cs\rightarrow\co\otimes\V\rightarrow\cq\rightarrow 0
\end{equation}
of locally free sheaves. The Borel-Bott-Weil theorem \cite[\SS 10]{Bot}, which we will use frequently in this paper, computes the cohomology of sheaves of the form
\begin{equation}\label{eq:bott}
\Sigma^{\lambda}\cq\otimes\Sigma^{\mu}\cs,
\end{equation}
where $\lambda\in\bz^{d}$ and $\mu\in\bz^{2}$ are non-increasing. Here $\Sigma^{\lambda}$ denotes the Schur power corresponding to $\lambda$, 
generalizing the symmetric power $\Sigma^{(k)}=\S^{k}$ and the exterior power $\Sigma^{(1^{k})}=\Lambda^{k}$. 

\begin{theorem}[Borel-Bott-Weil]
Let $\nu=(\lambda,\mu)\in\bz^{d+2}$ and $\rho=(d+2, d+1, \ldots, 1)$. If the components of $\nu+\rho$ are pairwise distinct, then the only
nonvanishing cohomology group of the sheaf $\Sigma^{\lambda}\cq\otimes\Sigma^{\mu}\cs$ is
\begin{equation*}
\H^{l(\sigma)}(\Sigma^{\lambda}\cq\otimes\Sigma^{\mu}\cs)=\Sigma^{\sigma(\nu+\rho)-\rho}\V,
\end{equation*}
where $\sigma\in\mathfrak{S}_{d+2}$ is the unique permutation such that $\sigma(\nu+\rho)$ is non-increasing, and $l(\sigma)$ is its length. If the components of $\nu+\rho$ are not pairwise distinct, then $\H^{\ast}(\Sigma^{\lambda}\cq\otimes\Sigma^{\mu}\cs)=0$.
\end{theorem}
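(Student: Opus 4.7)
The statement is the Borel-Bott-Weil theorem applied to the parabolic $P \subset \mathrm{GL}(\V)$ stabilising a $2$-plane, so the proof I would sketch is the standard one in this notation. First I would identify $\G$ with the homogeneous space $\mathrm{GL}(\V)/P$ and recognise $\Sigma^{\lambda}\cq \otimes \Sigma^{\mu}\cs$ as the equivariant vector bundle induced by the irreducible representation of the Levi $\mathrm{GL}_{d} \times \mathrm{GL}_{2}$ of highest weight $\nu = (\lambda, \mu) \in \bz^{d+2}$. The sequence $\rho = (d+2, d+1, \ldots, 1)$ differs from the usual half-sum of positive roots by a scalar shift, which affects neither the regularity of $\nu + \rho$ nor the length $l(\sigma)$, so either convention is admissible.

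The plan is to reduce to line bundles on the complete flag variety $\mathrm{Fl}(\V) = \mathrm{GL}(\V)/B$. Let $\pi \colon \mathrm{Fl}(\V) \to \G$ be the projection; its fibres are isomorphic to $P/B$, the flag variety of the Levi. Since $\nu$ is already dominant for this Levi, fibrewise Borel-Weil gives $\pi_{\ast}\cl_{\nu} = \Sigma^{\lambda}\cq \otimes \Sigma^{\mu}\cs$ with all higher direct images vanishing, where $\cl_{\nu}$ denotes the equivariant line bundle on $\mathrm{Fl}(\V)$ of weight $\nu$. The Leray spectral sequence then identifies $\H^{\ast}(\G, \Sigma^{\lambda}\cq \otimes \Sigma^{\mu}\cs)$ with $\H^{\ast}(\mathrm{Fl}(\V), \cl_{\nu})$.

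I would compute $\H^{\ast}(\mathrm{Fl}(\V), \cl_{\nu})$ by induction on a reduced word for $\sigma$, using the $\P^{1}$-bundles $\mathrm{Fl}(\V) \to \mathrm{Fl}(\V)_{i}$ that forget the $i$-th step of the flag. For each simple reflection $s_{i}$ the fibrewise Bott formula for $\P^{1}$ yields three cases according to the sign of $\langle \nu + \rho, \alpha_{i}^{\vee}\rangle$: if positive nothing changes; if zero the cohomology vanishes globally; and if negative one obtains $\H^{\ast}(\mathrm{Fl}(\V), \cl_{\nu}) = \H^{\ast - 1}(\mathrm{Fl}(\V), \cl_{s_{i}\cdot\nu})$, where $s_{i}\cdot\nu = s_{i}(\nu + \rho) - \rho$. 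Iterating along the unique $\sigma$ which moves $\nu + \rho$ into the dominant chamber, and checking that no intermediate step is singular in the regular case, leaves a single non-vanishing group in degree $l(\sigma)$, equal by Borel-Weil to $\Sigma^{\sigma(\nu+\rho)-\rho}\V$. The main obstacle is the bookkeeping: verifying that any non-regular $\nu + \rho$ must encounter a zero pairing along \emph{every} reduced route, forcing the total vanishing claimed in the second assertion. In the paper itself the theorem would simply be invoked, but this is the route I would take to derive it from scratch.
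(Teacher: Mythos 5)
The paper offers no proof of this statement: it is the classical Borel--Bott--Weil theorem, quoted in the normalisation convenient for Grassmannian computations and then used as a black box throughout (cf.\ \cite{AK}). Your sketch is the standard Demazure-style derivation and is essentially sound: identifying $\Sigma^{\lambda}\cq\otimes\Sigma^{\mu}\cs$ with the bundle induced from the Levi representation of highest weight $\nu$, reducing to the line bundle $\cl_{\nu}$ on the full flag variety via the degenerate Leray spectral sequence (relative Borel--Weil along the fibres, the flag variety of the Levi $\mathrm{GL}_{d}\times\mathrm{GL}_{2}$), and inducting over a reduced word for $\sigma$ via the forgetful $\bp^{1}$-fibrations are exactly the usual steps. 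Your observation that the paper's $\rho=(d+2,\dots,1)$ differs from the half-sum of positive roots by a constant vector is also correct and harmless: a constant shift changes neither the pairwise-distinctness of $\nu+\rho$, nor the sorting permutation $\sigma$ and its length, nor the output $\sigma(\nu+\rho)-\rho$ of the dot action. One simplification to your ``main obstacle'': in the singular case you do \emph{not} need to verify that every reduced route encounters a zero pairing. The cohomology of the fixed line bundle $\cl_{\nu}$ is independent of the route, so it suffices to exhibit a single chain of exchange isomorphisms terminating at a weight $\mu'$ with $\langle\mu'+\rho,\alpha_{i}^{\vee}\rangle=0$ for some simple root $\alpha_{i}$; such a chain always exists because the stabiliser of a singular $\nu+\rho$ in $\mathfrak{S}_{d+2}$ contains a transposition, which can be brought to a simple one. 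The only remaining care is the sign and duality bookkeeping matching the subbundle $\cs$ against the chosen Borel, which you can calibrate against \Cref{example} (e.g.\ $\S^{3}\cs^{\vee}$ corresponds to $\nu=(0,\dots,0,0,-3)$, for which $\nu+\rho$ is already non-increasing and $\H^{0}=\S^{3}\V^{\vee}$).
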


We will in particular rely on the following standard applications, where we tacitly use the canonical isomorphism $\S^{\vee}=\cs\otimes\det(\cq)$.

\begin{example}\label{bbw}
(i) We have $\H^{0}(\S^{n}\cs^{\vee})=\S^{n}\V^{\vee}$ and $\H^{m}(\S^{n}\cs^{\vee})=0$ for $m\geqslant 1$. \\
(ii) Using the decomposition $\SEnd(\cs) = (\Lambda^{2}\cs\oplus\S^{2}\cs)\otimes\det(\cq)$, we obtain
\begin{equation*}
\H^{0}(\SEnd(\cs)) = \bc \quad \mathrm{and} \quad \H^{m}(\SEnd(\cs)) = 0 \quad (m\geqslant 1).
\end{equation*}
(iii) Tensoring (\ref{eq:universal}) with $\cs^{\vee}$, using (ii) and $\Theta_{\bg}=\SHom(\cs, \cq)$, we get
\begin{equation*}
\End(\V)/(1)\xrightarrow{\sim}\H^{0}(\Theta_{\bg}) \quad \mathrm{and} \quad \H^{m}(\Theta_{\bg}) = 0 \quad (m\geqslant 1).
\end{equation*}
\end{example}

\subsection{Fano schemes}

Let $\S$ be a scheme, and $\bp_{\S}=\bp\times\S$. For a closed subscheme $\X\subset\bp_{\S}$, we denote by
\begin{equation*}
\F(\X/\S) = \Hilb^{\T+1}(\X/\S)
\end{equation*}
the relative Hilbert scheme of lines (Fano scheme). Consider the universal subscheme 
\begin{equation*}
\cl_{\S}\subset\bp_{\S}\times_{\S} \F(\bp_{\S}/\S)
\end{equation*}
and write $q_{\S}$ and $p_{\S}$ for the projections of $\bp_{\S}\times_{\S} \F(\bp_{\S}/\S)$ to $\bp_{\S}$ and $\F(\bp_{\S}/\S)$, respectively. 
By \cite[Theorem 2.17]{AlK}, the closed subscheme $\F(\X/\S)\subset\F(\bp_{\S}/\S)$ is the zero scheme of the canonical morphism
\begin{equation}\label{eq:can}
q_{\S}^{\ast}\ci_{\X/\bp_{\S}}\rightarrow\co_{\cl_{\S}}
\end{equation}
of sheaves on $\bp_{\S}\times_{\S} \F(\bp_{\S}/\S)$. Of course,  $\F(\bp_{\S}/\S)=\bg\times\S$, where we view $\bg$ as $\F(\bp/\Spec(\bc))$; 
writing $\pi:\bg\times\S\rightarrow\bg$ for the projection, we have
\begin{equation*}
\cl_{\S}=\bp(\pi^{\ast}\cs) \quad \mathrm{and} \quad \co_{\cl_{\S}}(1)=q_{\S}^{\ast}(\co_{\bp_{\S}}(1))\vert_{\cl_{\S}}.
\end{equation*} 
If $\X=\Z(f)$ for $f\in\H^{0}(\co_{\bp_{\S}}(3))$, then applying $p_{\S\ast}$ to (\ref{eq:can}) induces a section $\sigma_{f}$ of
\begin{equation*}
p_{\S\ast}\co_{\cl_{\S}}(3) = \pi^{\ast}\S^{3}\cs^{\vee},
\end{equation*}
such that, invoking \cite[Proposition 2.3]{AlK},
\begin{equation}\label{eq:zero}
\Z(\sigma_{f}) = \F(\X/\S).
\end{equation}

\begin{remark}\label{sigma}
The map
\begin{equation*}
\sigma:\H^{0}(\co_{\bp_{\S}}(3))\rightarrow\H^{0}(\pi^{\ast}\S^{3}\cs^{\vee})
\end{equation*}
is an isomorphism.
\end{remark}

If $\S=\Spec(\Lambda)$ is affine, we use the abbreviation $\F(\X/\Lambda)=\F(\X/\Spec(\Lambda))$. 
We first consider $\F(\X)=\F(\X/\bc)$ for a cubic $\X=\Z(f)$, $f\in\H^{0}(\co_{\bp}(3))$. 
This scheme is particularly well-behaved when the singular locus of $\X$ is finite, see Corollary 1.4 and Proposition 1.19 of \cite{AlK}:
\begin{theorem}[Altman-Kleiman]\label{AlKtheorem}
Let $\X$ be a cubic with finitely many singularities. The Hilbert scheme $\F=\F(\X)$ is of pure dimension $2d-4$; moreover, $\F$ is reduced for $d\geqslant 4$.
\end{theorem}
As the rank of $\S^{3}\cs^{\vee}$ is $4$, this result in particular implies that the section $\sigma_{f}$ is regular. 
Hence $\F=\Z(\sigma_{f})$ is a local complete intersection, the Koszul complex
\begin{equation}\label{eq:Koszul}
0\rightarrow\Lambda^{4}\S^{3}\cs\rightarrow\Lambda^{3}\S^{3}\cs\rightarrow\Lambda^{2}\S^{3}\cs\rightarrow\S^{3}\cs\rightarrow\ci_{\F/\bg}\rightarrow 0
\end{equation}
is exact, $\sigma_{f}$ induces a canonical isomorphism
\begin{equation*}
\cn_{\F/\bg}\xrightarrow{\sim}\S^{3}\cs\vert_{\F}^{\vee},
\end{equation*}
and the canonical sheaf of $\F$ is given by $\omega_{\F}=\co_{\F}(4-d)$, where $\co_{\F}(1)$ is given by the Plücker embedding. 
The proof of our main theorem relies on the following result.
\begin{lemma}\label{pletyhsm}
We have
\begin{equation*}
\Lambda^{2}\S^{3}\cs = \Sigma^{5,1}\cs\oplus\Sigma^{3^{2}}\cs, \quad \Lambda^{3}\S^{3}\cs = \Sigma^{6, 3}\cs, \quad \Lambda^{4}\S^{3}\cs = \Sigma^{6^{2}}\cs.
\end{equation*} 
\end{lemma}

\begin{proof}
To compute the decomposition of the plethysm $\Lambda^{n}\S^{m}$ into Schur powers, it suffices to compute the corresponding plethysm of Schur functions
\begin{equation*}
s_{1^{n}}\circ s_{m} = \sum_{\lambda} a^{\lambda}_{n, m} s_{\lambda}.
\end{equation*}
Here the sum is taken over all partitions $\lambda$ of $nm$ with at most $n$ parts, 
and the numbers $a^{\lambda}_{n, m}$ can be expressed in terms of generalized Kostka numbers \cite[I \SS 8]{Mac}. 
For small $n$ and $m$, these coefficients are relatively easy to compute; we find
\begin{align*}
s_{1^2}\circ s_{3} &= s_{5,1}+s_{3^2}, \\
s_{1^3}\circ s_{3} &= s_{7,1^2} + s_{6,3} + s_{5,3,1} + s_{3^2}, \\
s_{1^4}\circ s_{3} &= s_{9,1^3} + s_{8,3,1}+s_{7,4,1}+s_{7,3,1^{2}}+s_{6^2}+s_{6,4,2}+s_{6,3^2}+s_{5^2,1^2}+s_{5,3^{2},1}+s_{3^4}.
\end{align*}
It remains to observe that since $\cs$ has rank $2$, we have $\Sigma^{\lambda}\cs=0$ if $\lambda$ has more than two parts. 
(Note that since $\Lambda^{4}\S^{3}\cs = \det(\S^{3}\cs)$, it is easy to show $\Lambda^{4}\S^{3}\cs=\det(\cs)^{\otimes 6}$ directly.)
\end{proof}

\begin{proposition}\label{coh}
Consider the sheaves $\Lambda^{n}\S^{3}\cs\otimes\Theta_{\bg}$ and $\Lambda^{m}\S^{3}\cs\otimes\S^{3}\cs^{\vee}$ on the Grassmannian $\bg$.
For $d\geqslant 6$, $1\leqslant n\leqslant 4$, and $2\leqslant m\leqslant 4$ the cohomology of these sheaves is zero. 
For $d=5$, the only non-vanishing cohomology groups of these sheaves are
\begin{equation*}
\H^{4}(\Lambda^{2}\S^{3}\cs\otimes\Theta_{\bg})=\V^{\ast} \quad and \quad \H^{5}(\Lambda^{2}\S^{3}\cs\otimes\S^{3}\cs^{\vee})=\V^{\ast}.
\end{equation*}
For any $d\geqslant 5$, the only non-vanshing cohomology group of $\S^{3}\cs\otimes\S^{3}\cs^{\vee}$ is
\begin{equation*}
\H^{0}(\S^{3}\cs\otimes\S^{3}\cs^{\vee})=\det(\V)^{\otimes 3}.
\end{equation*}
\end{proposition}

\begin{proof}
	By applying \Cref{pletyhsm}, $\cs^{\vee}=\cs\otimes\det(\cq)$, and the Pieri rule, we obtain
	\begin{align*}
		\S^{3}\cs\otimes\Theta_{\bg} &= \Sigma^{2,1^{d-1}}\cq\otimes(\S^{4}\cs\oplus\Sigma^{3,1}\cs)\\
		\Lambda^{2}\S^{3}\cs\otimes\Theta_{\bg} &= \cq\otimes(\S^{5}\cs\oplus\Sigma^{4,1}\cs\oplus\Sigma^{3,2}\cs), \\
		\Lambda^{3}\S^{3}\cs\otimes\Theta_{\bg} &= \cq\otimes(\Sigma^{6,2}\cs\oplus\Sigma^{5,3}\cs), \\
		\Lambda^{4}\S^{3}\cs\otimes\Theta_{\bg} &= \cq\otimes\Sigma^{6,5}\cs,
	\end{align*}
	for all $d\geqslant 3$. Similarly, we have the decompositions
	\begin{align*}
		S^{3}\cs\otimes\S^{3}\cs^{\vee} &= \Sigma^{3^d}\cq\otimes(\S^{6}\cs\oplus\Sigma^{5,1}\cs\oplus\Sigma^{4,2}\cs\oplus\Sigma^{3,3}\cs),\\
		\Lambda^{2}\S^{3}\cs\otimes\S^{3}\cs^{\vee} &= \Sigma^{3^d}\cq\otimes (\Sigma^{8,1}\cs\oplus\Sigma^{7,2}\cs\oplus\Sigma^{6,3}\cs^{\oplus 2}\oplus\Sigma^{5,4}\cs), \\
		\Lambda^{3}\S^{3}\cs\otimes\S^{3}\cs^{\vee} &= \S^{6}\cs\oplus\Sigma^{5,1}\cs\oplus\Sigma^{4,2}\cs\oplus\Sigma^{3,3}\cs, \\
		\Lambda^{4}\S^{3}\cs\otimes\S^{3}\cs^{\vee} &= \Sigma^{6,3}\cs.
	\end{align*}
	for all $d\geqslant 3$. It remains to apply the Borel-Bott-Weil theorem.
\end{proof}
The Koszul resolution (\ref{eq:Koszul}) induces a hypercohomology spectral sequence
\begin{equation*}
\E^{pq}_{1}=\H^{q}(\Lambda^{-p+1}\S^{3}\cs\otimes\cf)\Rightarrow\H^{p+q}(\ci_{\F/\bg}\otimes\cf)
\end{equation*}
for any locally free sheaf $\cf$ on $\bg$.

\begin{corollary}\label{degen}
For $d\geqslant 5$, the hypercohomology spectral sequences
\begin{gather*}
\E^{pq}_{1}=\H^{q}(\Lambda^{-p+1}\S^{3}\cs\otimes\Theta_{\bg})\Rightarrow\H^{p+q}(\ci_{\F/\bg}\otimes\Theta_{\bg}) \\ \E^{pq}_{1}=\H^{q}(\Lambda^{-p+1}\S^{3}\cs\otimes\S^{3}\cs^{\vee})\Rightarrow\H^{p+q}(\ci_{\F/\bg}\otimes\S^{3}\cs^{\vee})
\end{gather*}
degenerate at the $\E_{1}$-page.
\end{corollary}
For certain classes of complete intersections (including cubics of dimension $d\geqslant 6$), the latter result was obtained by Borcea \cite[\SS 5]{Bor}; 
our approach is similar to his, but Borcea does not explicitly compute the plethysms of \Cref{pletyhsm} --- by employing weight considerations, 
he instead proves a vanishing theorem (which, by \Cref{coh}, does not hold for $d=5$).

\subsection{Deformation theory}

We recall now some well-known general facts about functors of Artin rings, and explain our notation; 
for us an Artin ring is a local $\bc$-algebra which is finite over $\bc$. For a functor of Artin rings $\cf$, 
we denote by $t_{\cf}=\cf(\bc[\varepsilon])$ the tangent space of $\cf$, and if $\varphi:\cf\rightarrow\cg$ is a functorial morphism, 
we refer to
\begin{equation*}
d\varphi=\varphi(\bc[\varepsilon]):t_{\cf}\rightarrow t_{\cg}
\end{equation*}
as the differential of $\varphi$. For future reference, we state the following result \cite[Remark 2.3.8]{Ser}:
\begin{lemma}\label{artinfunctors}
Let $\varphi:\cf\rightarrow\cg$ be a morphism of functors of Artin rings. \\
(i) If $\cf$ and $\cg$ have a pro-representable hull, $\cf$ is smooth and $d\varphi$ surjective, then $\varphi$ is smooth. \\
(ii) If $\cf$ and $\cg$ are pro-representable, $\cf$ is smooth and $d\varphi$ bijective, then $\varphi$ is an isomorphism.
\end{lemma}

The local moduli functor $\cd_{\S}$ of a projective scheme $\S$ takes an Artin ring $\Lambda$ to the set $\cd_{\S}(\Lambda)$ 
of isomorphism classes of deformations of $\S$ over $\Lambda$. It has three basic properties:
\begin{theorem}\label{moduli}
(i) The functor $\cd_{\S}$ has a pro-representable hull. \\
(ii) If $\H^{0}(\Theta_{\S})=0$, then $\cd_{\S}$ is pro-representable. \\
(iii) If $\S$ is reduced, then $t_{\cd_{\S}}=\Ext^{1}(\Omega^{1}_{\S}, \co_{\S})$; if $\S$ is also a local complete intersection, 
then $\Ext^{2}(\Omega^{1}_{\S}, \co_{\S})$ is an obstruction space for $\cd_{\S}$.
\end{theorem}
We refer to Theorem 2.4.1, Proposition 2.4.8, and Corollary 2.6.4 of \cite{Ser}; 
(i) is originally due to Schlessinger \cite[Proposition 3.10]{Schle}, see \cite[Proposition 4]{Gro} for (ii). 
For a closed subscheme $\Z$ of $\S$, the Hilbert functor of $\S$ induces a functor of Artin rings $\ch_{\Z/\S}$ (the local Hilbert functor), 
which takes an Artin ring $\Lambda$ to the set $\ch_{\Z/\S}(\Lambda)$ of deformations of $\Z$ in $\S$ over $\Lambda$. 
By the existence of the Hilbert scheme of $\S$, $\ch_{\Z/\S}$ is pro-representable and $t_{\ch_{\Z/\S}}=\H^{0}(\cn_{\Z/\S})$. 
It is related to $\cd_{\Z}$ by a forgetful morphism
\begin{equation*}
\ch_{\Z/\S}\rightarrow\cd_{\Z}.
\end{equation*}
In view of the description (\ref{eq:zero}) of the Fano scheme as a zero scheme, we will need some specific results about deformations of zero schemes of sections.
\begin{lemma}\label{flatness}
Consider a local scheme $\Spec(\Lambda)$, a locally free sheaf $\cf$ on $\S\times\Spec(\Lambda)$, and a section $\sigma\in\H^{0}(\cf)$. 
If $\sigma\vert_{\S}\in\H^{0}(\cf\vert_{\S})$ is regular, then $\Z(\sigma)\rightarrow\Spec(\Lambda)$ is flat.
\end{lemma}
In particular, the morphism $\Z(\sigma)\rightarrow\Spec(\Lambda)$ is a deformation of $\Z(\sigma\vert_{\S})$ in $\S$ over $\Lambda$. 
After taking an open affine cover of $\S\times\Spec(\Lambda)$ trivializing $\cf$, this is a consequence of the equational criterion for flatness \cite[Example A.12]{Ser}. 
For the rest of this section, we assume that $\S$ is smooth, and $\cf$ is a locally free sheaf on $\S$.

\begin{lemma}\label{basic}
Let $\Z=\Z(\sigma)$ be the zero scheme of a regular section $\sigma\in\H^{0}(\cf)$. \\
(i) The differential of $\ch_{\Z/\S}\rightarrow\cd_{\Z}$ can be identified with the connecting morphism
\begin{equation*}
\H^{0}(\cn_{\Z/\S}) \rightarrow \Ext^{1}(\Omega^{1}_{\Z}, \co_{\Z})
\end{equation*}
associated to conormal sequence of $\Z\subset\S$. \\
(ii) Under the canonical identification $\cf\vert_{\Z}\simeq \cn_{\Z/\S}$, the restriction map
\begin{equation*}
\H^{0}(\cf)\rightarrow\H^{0}(\cn_{\Z/\S})
\end{equation*}
takes $\tau\in\H^{0}(\cf)$ to the first-order deformation of $\Z$ in $\S$ given by $\Z(\sigma+\varepsilon \tau)$.
\end{lemma}
We refer to \cite[Remark 3.2.10]{Ser} for (i); (ii) is a consequence of the standard identification $\ch_{\Z/\S}(\bc[\varepsilon])=\H^{0}(\cn_{\Z/\S})$ 
(see for instance the proof of \cite[Proposition 3.2.1]{Ser}), and \Cref{flatness} for $\Lambda=\bc[\varepsilon]$.

Finally, we consider the projection $\varphi:\S\times\H^{0}(\cf)\rightarrow\S$. 
There is a tautological section $\zeta\in\H^{0}(\varphi^{\ast}\cf)$ such that $\zeta(s, \sigma)=\sigma(s)$ for every point $(s, \sigma)$ of $\S\times\H^{0}(\cf)$; 
in particular, $\zeta\vert_{\S\times\{\sigma\}}=\sigma$. Let $\cz=\Z(\zeta)$ be its zero scheme, and 
\begin{equation*}
\pi:\cz\rightarrow\H^{0}(\cf)
\end{equation*}
be the projection. Then $\pi^{-1}(\sigma)=\Z(\sigma)$, and if $\sigma$ is regular, then \Cref{flatness} implies that $\pi$ is flat in a neighbourhood of $\sigma$, 
thus inducing a deformation of $\Z=\Z(\sigma)$. The following result \cite[Theorem 1.5]{Weh} gives a criterion for the completeness of the latter deformation.
\begin{lemma}[Wehler]\label{Wehl}
If $\H^{1}(\cf\otimes\ci_{\Z/\S})=0$ and $\H^{1}(\Theta_{\S}\vert_{\Z})=0$, then the Kodaira-Spencer map
\begin{equation*}
\kappa_{\pi, \sigma}: \H^{0}(\cf)\rightarrow\Ext^{1}(\Omega^{1}_{\Z}, \co_{\Z})
\end{equation*}
is surjective.
\end{lemma}

\section{The Hilbert polynomial of $\F$}

\subsection{Related results} Using Schubert calculus, Altman and Kleiman \cite[Proposition 1.6]{AlK} prove that the Plücker degree of $\F$ is given by
\begin{equation*}
\int_{\F} c_{1}(\co_{\F}(1))^{2d-4} = 27\frac{(2d-4)!}{d!(d-1)!}(3d^{2}-7d+4).
\end{equation*}
In the special case $d=3$, this is a theorem of Fano \cite[\SS 2]{Fan}. It is thus a natural question to determine, more generally, the Hilbert polynomial
\begin{equation*}
\chi(\co_{\F}(n)) = \sum_{k=0}^{2d-4} \frac{n^{k}}{k!}\int_{\F} c_{1}(\co_{\F}(1))^{k}\cap\mathrm{Td}(\F).
\end{equation*}
Altman and Kleiman (and, independently, Libgober \cite[\SS 2]{Lib}) show that for $d=3$, we have
\begin{equation*}
\chi(\co_{\F}(n)) = \frac{45}{2}n^{2}-\frac{45}{2}n+6.
\end{equation*}
In this section, we use \Cref{pletyhsm} to express the Hilbert polynomial $\chi(\co_{\F}(n))$, for any dimension $d$, in terms of the Pochhammer symbol.

\subsection{$\chi(\co_{\F}(n))$ via the Pochhammer symbol}

Recall that the Pochhammer symbol $(x)_{d}$ is defined by
\begin{equation*}
(x)_{d} = \prod_{j=0}^{d-1}(x+j).
\end{equation*}
	
\begin{theorem}\label{hilbert}
The Hilbert polynomial of $\F$ is given by
\begin{align*}
\chi(\co_{\F}(n)) &= \frac{1}{d!(d+1)!}\left(  (n+1)_{d} (n+2)_{d} - 4(n-2)_{d} (n+2)_{d} + (n-2)_{d} (n-1)_{d} \right.\\
&\left.+ 5(n-4)_{d} (n+1)_{d} - 4 (n-5)_{d} (n-1)_{d} + (n-5)_{d} (n-4)_{d}\right) .
\end{align*}
\end{theorem}

\begin{proof}
By the Koszul resolution, we obtain
\begin{equation*}
\chi(\co_{\F}(n)) = \chi(\co_{\G}(n)) - \chi(\S^{3}\cs(n)) + \chi(\Lambda^{2}\S^{3}\cs(n)) - \chi(\Lambda^{3}\S^{3}\cs(n)) + \chi(\Lambda^{4}\S^{3}\cs(n)).
\end{equation*}
Using \Cref{pletyhsm}, it suffices to describe the Hilbert polynomial of $\Sigma^{\mu_{1}, \mu_{2}}\cs$ for any $\mu_{1}\geqslant\mu_{2}$. We now establish the equality
 \begin{equation}\label{eq:poly}
\chi(\Sigma^{\mu_{1}, \mu_{2}}\cs(n)) = \frac{(\mu_{1}-\mu_{2}+1)}{(d+1)!d!}(n-\mu_{1}+1)_{d}(n-\mu_{2}+2)_{d}.
\end{equation}
To prove (\ref{eq:poly}), we may assume $n\geqslant\mu_{1}$. Since $\co_{\G}(n)=\Sigma^{(n^{d})}\cq$, 
\begin{equation*}
\chi(\Sigma^{n^{d}}\cq\otimes\Sigma^{\mu_{1}, \mu_{2}}\cs)=\dim\Sigma^{n^{d}, \mu_{1}, \mu_{2}}\V
\end{equation*}
by the Borel-Bott-Weil theorem. Hence
\begin{equation*}
\dim\Sigma^{n^{d}, \mu_{1}, \mu_{2}}\V = (\mu_{1}-\mu_{2}+1)\prod_{j=1}^{d} \frac{(n-\mu_{1}+j)(n-\mu_{2}+j+1)}{j(j+1)}
\end{equation*}
by the Weyl dimension formula.
In particular, combining (\ref{eq:poly}) with \Cref{pletyhsm}, we obtain
\begin{gather*}
\chi(\co_{\G}(n)) =  \frac{1}{d!(d+1)!} (n+1)_{d} (n+2)_{d}, \quad
\chi(\S^{3}\cs(n)) =  \frac{4}{d!(d+1)!} (n-2)_{d} (n+2)_{d}, \\
\chi(\Lambda^{2}\S^{3}\cs(n)) = \frac{1}{d!(d+1)!} (n-2)_{d} (n-1)_{d} + \frac{5}{d!(d+1)!} (n-4)_{d} (n+1)_{d}, \\
\chi(\Lambda^{3}\S^{3}\cs(n)) = \frac{4}{d!(d+1)!} (n-5)_{d} (n-1)_{d}, \quad
\chi(\Lambda^{4}\S^{3}\cs(n)) = \frac{1}{d!(d+1)!} (n-5)_{d} (n-4)_{d}.
\end{gather*}
\end{proof}

A result of Schlömlich \cite[\SS 3]{Schl} explicitly describes the coefficients of
\begin{equation*}
(x)_{d}= \sum_{k=0}^{d} {d\brack k}x^{k},
\end{equation*}
which are the (unsigned) Stirling numbers of the first kind, in terms of binomial coefficients:
\begin{equation*}
{d\brack k} = (-1)^{d-k}\sum_{m=0}^{d-k} \binom{d-1+m}{k-1}\binom{2d-k}{d+m}\sum_{n=0}^{m} \frac{(-1)^{n} n^{d-k+m}}{n!(m-n)!}.
\end{equation*}
Combining this with \Cref{hilbert}, we can express the coefficients of $\chi(\co_{\F}(n))$ as sums of binomial coefficients; 
this shows in particular that our expression for $\chi(\co_{\F}(n))$ is a polynomial of degree $2d-4$, as it should be.

\begin{corollary}
We have the expansion
\begin{equation*}
\chi(\co_{\F}(n)) = 27\frac{(3d-4)}{d!(d-2)!}n^{2d-4}+27\frac{(3d-4)(d-4)}{d!(d-3)!}n^{2d-5}+\cdots.
\end{equation*}
\end{corollary}

\Cref{hilbert} and Kodaira vanishing 
\begin{equation*}
\chi(\co_{\F}(n))=h^{0}(\co_{\F}(n)) \quad (n\geqslant 5-d)
\end{equation*}
allow one to compute the dimension of the space $\H^{0}(\ci_{\F/\bg}(n))$ of global sections of $\co_{\bg}(n)$ vanishing on $\F$. 
Indeed, for $d\geqslant 4$, Debarre and Manivel \cite[Theorem 4.1]{DeM} prove that $\H^{1}(\ci_{\F/\bg}(n))=0$ for $n\geqslant 0$. 
There is thus an exact sequence of the form
\begin{equation*}
0\rightarrow\H^{0}(\ci_{\F/\bg}(n))\rightarrow\H^{0}(\co_{\bg}(n))\rightarrow\H^{0}(\co_{\F}(n))\rightarrow 0.
\end{equation*}

\subsection{Examples} 

Writing $\X_{d}$ to indicate the dimension $d$ of the cubic $\X$, we have
\begin{gather*}
\chi(\co_{\F(\X_{4})}(n)) = \frac{9}{2}n^{4} + \frac{15}{2}n^{2} +3, \\
\chi(\co_{\F(\X_{5})}(n)) = \frac{33}{80}n^{6} + \frac{99}{80}n^{5} + \frac{57}{16} n^{4} + \frac{81}{16}n^{3} + \frac{241}{40}n^{2} + \frac{37}{10}n + 1, \\
\chi(\co_{\F(\X_{6})}(n)) =\frac{7}{320}n^{8} + \frac{7}{40}n^{7} + \frac{391}{480}n^{6} + \frac{39}{16}n^{5} + \frac{4889}{960}n^{4} + \frac{591}{80}n^{3} + \frac{1697}{240}n^{2} +4n +1,  \\
\chi(\co_{\F(\X_{7})}(n)) = \frac{17}{22400}n^{10} +\frac{51}{4480}n^{9}+\frac{589}{6720}n^{8}+\frac{979}{2240}n^{7}+\frac{4903}{3200}n^{6}+\frac{2493}{640}n^{5} \\
+\frac{4023}{560}n^{4}+\frac{10503}{1120}n^{3}+\frac{34421}{4200}n^{2}+\frac{599}{140}n+1.
\end{gather*}

\section{Deformations of $\X$}

\subsection{Generalities} 

Consider a cubic $\X\subset\bp$ of dimension $d\geqslant 3$, having only finitely many singularities, and defined by $f\in\H^{0}(\co_{\bp}(3))$.
\begin{lemma}\label{cubic}
(i) The restriction map
\begin{equation*}
\H^{0}(\co_{\bp}(3))\rightarrow\H^{0}(\co_{\X}(3))
\end{equation*}
is surjective with kernel $(f)$. \\
(ii) The restriction map
\begin{equation*}
\H^{0}(\Theta_{\bp})\rightarrow\H^{0}(\Theta_{\bp}\vert_{\X})
\end{equation*}
is an isomorphism, and $\H^{1}(\Theta_{\bp}\vert_{\X})=0$. \\
(iii) We have $\Ext^{2}(\Omega^{1}_{\X}, \co_{\X})=0$.
\end{lemma}
The proof is straightforward. Part (iii) implies that $\cd_{\X}$ is smooth, and a consequence of (ii) is that the forgetful morphism
\begin{equation*}
\ch_{\X/\bp}\rightarrow\cd_{\X}
\end{equation*}
is smooth, in particular surjective. 

\begin{remark}\label{cubicrem}
In fact, any deformation $\fx\subset\bp_{\Lambda}$ of $\X$ in $\bp$ over an Artin ring $\Lambda$ is a cubic: 
there exists a section $f_{\Lambda}$ of $\co_{\bp_{\Lambda}}(3)$ extending $f$, such that $\fx=\Z(f_{\Lambda})$ \cite[Theorem 1]{Weh2}.
\end{remark}

\subsection{Automorphisms}

As the vanishing of $\H^{0}(\Theta_{\X})$ guarantees the pro-representability of $\cd_{\X}$, we are led to study $\H^{0}(\Theta_{\X})$. 
It is well-known that $\H^{0}(\Theta_{\X})=0$ when $\X$ is smooth (see \cite[\SS 5]{Jor}, \cite[Lemma 14.2]{KoS}). We now extend this result to the class of Lefschetz cubics
in the sense of \cite[Definition 5.1]{ClG}; apart from smooth cubics, this class consists of the simplest singular cubics: those whose singular
locus consists of a single node. Observe that $\H^{0}(\Theta_{\X})$ is the kernel of the derivative
\begin{equation*}
df:\H^{0}(\Theta_{\bp}\vert_{\X})\rightarrow\H^{0}(\co_{\X}(3)),
\end{equation*}
which under the identification
\begin{equation}
\begin{tikzcd}
\H^{0}(\Theta_{\bp}\vert_{\X}) \arrow[r, "df"] & \H^{0}(\co_{\X}(3)) \\
\H^{0}(\Theta_{\bp}) \arrow[u, "\isom"] \arrow[r] & \H^{0}(\co_{\bp}(3))/(f) \arrow[u, swap, "\isom"] 	
\end{tikzcd}
\end{equation}	
is given by $df(\sum L_{i}\partial_{i})=\sum L_{i}\partial_{i} f \mod (f)$. 
We can thus view $\H^{0}(\Theta_{\X})$ as the subspace of $\H^{0}(\Theta_{\bp})$ consisting of all $\sum L_{i}\partial_{i}$ such that 
\begin{equation}\label{eq:euler}
\sum L_{i}\partial_{i} f = \lambda f 
\end{equation}
for some constant $\lambda$.

\begin{proposition}\label{lefschetz}
If $\X$ is a Lefschetz cubic, then $\H^{0}(\Theta_{\X})=0$.
\end{proposition}

\begin{proof}
Consider a cubic $\X$ with a single node $x_{0}$. After a linear change of coordinates, we may assume $x_{0}=[0:\cdots:0:1]$.
Then the equation defining $\X$ can be written as
\begin{equation}\label{eq:split}
f(x_{0},\ldots,x_{d+1})=g(x_{0},\ldots,x_{d})+x_{d+1}h(x_{0},\ldots,x_{d}),
\end{equation}
where $g$ is a cubic and $h$ a non-degenerate quadric. Inserting (\ref{eq:split}) into (\ref{eq:euler}), we have to show that if
\begin{equation}\label{eq:derivsplit}
\sum_{i=0}^{d} L_{i}\partial_{i}g + x_{d+1}\sum_{i=0}^{d} L_{i}\partial_{i}h + L_{d+1}h = \lambda(g+x_{d+1}h)
\end{equation}
for some constant $\lambda$, then $L_{i}=\mu x_{i}$ for some constant $\mu$. Write
\begin{equation*}
L_{i}(x_{0},\ldots, x_{d+1}) = \lambda_{i}x_{d+1}+l_{i}(x_{0}, \ldots, x_{d}).
\end{equation*}
Taking the coefficient of $x_{d+1}^{2}$ in (\ref{eq:derivsplit}), we obtain
\begin{equation*}
\sum_{i=0}^{d} \lambda_{i}\partial_{i}h=0,
\end{equation*}
and in particular, since $h$ is non-degenerate, $\lambda_{i}=0$ for $0\leqslant i\leqslant d$. 
On the other hand, taking the coefficient of $x_{d+1}$ in (\ref{eq:derivsplit}) gives
\begin{equation}\label{eq:derivsplit2}
\sum_{i=0}^{d} l_{i}\partial_{i} h + \lambda_{d+1}h = \lambda h, \quad \sum_{i=0}^{d} l_{i}\partial_{i} g + l_{d+1}h = \lambda g.
\end{equation}
Consider now the linear subspace $\bp^{'}=\Z(x_{d+1})\subset\bp$, and the smooth complete intersection $\Z=\Z(g,h)\subset\bp^{'}$. 
The restriction maps induce isomorphisms
\begin{gather*}
\H^{0}(\Theta_{\bp^{'}}) \xrightarrow{\sim} \H^{0}(\Theta_{\bp^{'}}\vert_{\Z}), \quad
\H^{0}(\co_{\bp^{'}}(2))/(h) \xrightarrow{\sim} \H^{0}(\co_{\Z}(2)), \\
\H^{0}(\co_{\bp^{'}}(3))/(g, h\H^{0}(\co_{\bp^{'}}(1))) \xrightarrow{\sim} \H^{0}(\co_{\Z}(3)).
\end{gather*}
Using these isomorphisms, one can, parallel to our description of $\H^{0}(\Theta_{\X})$, 
explicitly describe $\H^{0}(\Theta_{\Z})$ as a subspace of $\H^{0}(\Theta_{\bp^{'}})$. 
Then (\ref{eq:derivsplit2}) precisely means that
\begin{equation*}
\sum_{i=0}^{d} l_{i}\partial_{i}\in\H^{0}(\Theta_{\Z}).
\end{equation*}
Since $\Z$ is smooth, we have $\H^{0}(\Theta_{\Z})=0$; in particular, $l_{i}(x_{0}, \ldots, x_{d})=\mu x_{i}$ for a constant $\mu$. 
Inserting this into (\ref{eq:derivsplit2}) gives 
\begin{equation*}
2\mu h+\lambda_{d+1}h = \lambda h, \quad 3\mu g+l_{d+1}h = \lambda g.
\end{equation*}
Since $\X$ is irreducible, the second equation implies $l_{d+1}=0$ and $\lambda=3\mu$, while the first one gives $\lambda_{d+1}=\lambda-2\mu=\mu$.
\end{proof}

More generally, we expect that
\begin{equation*}
\H^{0}(\Theta_{\X})=0
\end{equation*}
for any nodal cubic.  Low-dimensional ($2\leqslant d\leqslant 4$) nodal cubics are in fact known to be stable 
in the sense of geometric invariant theory \cite[Theorem 1.1]{Laz}, and so the vanishing of $\H^{0}(\Theta_{\X})$ holds for $2\leqslant d\leqslant 4$.

\subsection{Locally trivial deformations}

Instead of $\cd_{\X}$, one could consider the subfunctor $\cd_{\X}^{'}$ of $\cd_{\X}$ given by the locally trivial deformations of $\X$; 
here $\H^{2}(\Theta_{\X})$ is an obstruction space of $\cd_{\X}^{'}$ \cite[Theorem 2.4.1]{Ser}. 
While it is known that if $d=2$ or $d=3$, then $\H^{2}(\Theta_{\X})=0$ \cite[Proposition 4]{Nam}, this vanishing need not hold when $d$ is large. 
In fact, the following holds:
\begin{proposition}\label{manynodes}
Let $\X$ be a nodal cubic with $\H^{0}(\Theta_{\X})=0$. If $\X$ has $\delta>\binom{d+2}{3}$ nodes, then
\begin{equation*}
\H^{2}(\Theta_{\X}) \neq 0.
\end{equation*} 
\end{proposition}
Indeed, $\H^{0}(\Theta_{\X})=0$ and \Cref{cubic} imply that
\begin{equation*}
\dim \Ext^{1}(\Omega^{1}_{\X}, \co_{\X}) = \binom{d+2}{3}, 
\end{equation*} 
and there is an exact sequence of the form
\begin{equation*}
\Ext^{1}(\Omega^{1}_{\X}, \co_{\X})\rightarrow\H^{0}(\SExt^{1}(\Omega^{1}_{\X}, \co_{\X}))\rightarrow\H^{2}(\Theta_{\X})\rightarrow 0,
\end{equation*}
coming from the local-to-global spectral sequence; here $\SExt^{1}(\Omega^{1}_{\X}, \co_{\X})$ is the structure sheaf of the singular locus. 
As a special case of a result of Varchenko \cite[\SS 2]{Var}, 
\begin{equation*}
\delta\leqslant \binom{d+2}{[\frac{d+1}{2}]}
\end{equation*}
which turns out to be optimal; hence $\delta>\binom{d+2}{3}$ is possible only for $d\geqslant 7$.

\begin{remark}
The space $\H^{2}(\Theta_{\X})$ is canonically isomorphic to $\H^{1}(\cn^{'}_{\X/\bp})$, 
where $\cn^{'}_{\X/\bp}$ is the equisingular normal sheaf of $\X\subset\bp$. 
We can view $\X$ as a point of the Hilbert scheme $\V^{\delta}_{d}$ of cubic hypersurfaces in $\bp$ with $\delta$ nodes (Severi scheme); 
$\H^{0}(\cn^{'}_{\X/\bp})$ and $\H^{1}(\cn^{'}_{\X/\bp})$ are then the tangent and obstruction spaces of $\V^{\delta}_{d}$ at $[\X]$ \cite[\SS 3]{GrK}. 
\Cref{manynodes} naturally leads to an extension of Theorem 111 of \cite{Cat}.
\end{remark}

\section{Deformations of $\F$.}

\subsection{The functorial morphism $\eta$.}

Consider a cubic $\X$ with finitely many singularities, and an infinitesimal deformation $\fx$ of $\X$ over an Artin ring $\Lambda$. 
Then $\fx$ is induced by a deformation $\fx\subset\bp_{\Lambda}$ of $\X$ in $\bp$, and $\fx\subset\bp_{\Lambda}$ is a cubic (\Cref{cubicrem}). 
Using the induced polarisation $\co_{\fx}(1)$ of $\fx$ over $\Lambda$, we can consider the relative Hilbert scheme of lines $\F(\fx/\Lambda)$, 
which is naturally a closed subscheme of $\bg_{\Lambda}$. Recalling the zero scheme description (\ref{eq:zero}) and the regularity of the section defining $\F$, 
\Cref{flatness} implies that the morphism
\begin{equation*}
\F(\fx/\Lambda)\rightarrow\Spec(\Lambda)
\end{equation*}
is flat. In particular, $\F(\fx/\Lambda)$ can be thought of as an infinitesimal deformation of $\F$ in $\bg$ over $\Lambda$. 
For any morphism of local Artin rings $\Lambda\rightarrow\Lambda^{'}$, we have
\begin{equation*}
\F(\mathfrak{X} / \Lambda)\times_{\Lambda} \Lambda^{'} = \F(\fx_{\Lambda^{'}}/\Lambda^{'})
\end{equation*}
as a subscheme of $\bg_{\Lambda^{'}}=\bg_{\Lambda}\times_{\Lambda}\Lambda^{'}$. The relative Hilbert scheme thus defines a morphism
\begin{equation*}
\eta_{\ch}:\ch_{\X/\bp}\rightarrow\ch_{\F/\bg}.
\end{equation*}
of local Hilbert functors. Since $\Pic(\fx)=\bz$ by the Grothendieck-Lefschetz theorem and $\omega_{\fx/\Lambda}=\co_{\fx}(1-d)$, 
the isomorphism class of the deformation $\F(\fx/\Lambda)$ of $\F$ over $\X$ depends only on the isomorphism class of the deformation $\fx$ of $\X$ over $\Lambda$, 
and so we get a morphism
\begin{equation}
\eta:\cd_{\X}\rightarrow\cd_{\F},
\end{equation}
related to $\eta_{\ch}$ by a commutative diagram
\begin{equation}\label{eq:comp}
\begin{tikzcd}
\ch_{\X/\bp}  \arrow[r] \arrow[d, swap, "\eta_{\ch}"] & \cd_{\X}  \arrow[d, "\eta"]  \\
\ch_{\F/\bg} \arrow[r] & \cd_{\F}.
\end{tikzcd}
\end{equation}

The proof of our main theorem requires an analogue of \Cref{cubic} for $\F\subset\bg$.
\begin{lemma}\label{fano}
Let $d\geqslant 5$.
(i) The restriction map
\begin{equation*}
\H^{0}(\S^{3}\cs^{\vee})\rightarrow\H^{0}(\S^{3}\cs\vert_{\F}^{\vee})
\end{equation*}
is surjective with kernel $(\sigma_{f})$. \\
(ii) The restriction map
\begin{equation*}
\H^{0}(\Theta_{\bg})\rightarrow\H^{0}(\Theta_{\bg}\vert_{\F})
\end{equation*}
is an isomorphism, and $\H^{1}(\Theta_{\bg}\vert_{\F})=0$. 
\end{lemma}

\begin{proof}
(i) By \Cref{degen}, the spectral sequence
\begin{equation*}
\E^{pq}_{1}=\H^{q}(\Lambda^{-p+1}\S^{3}\cs\otimes\S^{3}\cs^{\vee})\Rightarrow\H^{p+q}(\ci_{\F/\bg}\otimes\S^{3}\cs^{\vee})
\end{equation*}
degenerates at the $\E_{1}$-page. In particular,
\begin{equation*}
\H^{0}(\ci_{\F/\bg}\otimes\S^{3}\cs^{\vee})\simeq\H^{0}(\S^{3}\cs\otimes\S^{3}\cs^{\vee}) \quad \mathrm{and} \quad \H^{1}(\ci_{\F/\bg}\otimes\S^{3}\cs^{\vee})=0.
\end{equation*}
Here $\H^{0}(\S^{3}\cs\otimes\S^{3}\cs^{\vee})$ is one-dimensional (\Cref{coh}), and it remains to combine this with the exact sequence in cohomology associated to
\begin{equation*}
0\rightarrow\ci_{\F/\bg}\otimes\S^{3}\cs^{\vee}\rightarrow\S^{3}\cs^{\vee}\rightarrow\S^{3}\cs\vert_{\F}^{\vee}\rightarrow 0.
\end{equation*}
(ii) Similarly, by \Cref{degen} the spectral sequence 
\begin{equation*}
\E^{pq}_{1}=\H^{q}(\Lambda^{-p+1}\S^{3}\cs\otimes\Theta_{\bg})\Rightarrow\H^{p+q}(\ci_{\F/\bg}\otimes\Theta_{\bg})
\end{equation*}
degenerates at the $\E_{1}$-page, and we obtain
\begin{equation*}
\H^{0}(\ci_{\F/\bg}\otimes\Theta_{\bg}) = \H^{1}(\ci_{\F/\bg}\otimes\Theta_{\bg}) = \H^{2}(\ci_{\F/\bg}\otimes\Theta_{\bg}) = 0.
\end{equation*}
The result follows from this vanishing, and the exact sequence
\begin{equation*}
0\rightarrow\ci_{\F/\bg}\otimes\Theta_{\bg}\rightarrow\Theta_{\bg}\rightarrow\Theta_{\bg}\vert_{\F}\rightarrow 0. \qedhere
\end{equation*}
\end{proof}

\begin{corollary}
The forgetful morphism $\ch_{\F/\bg}\rightarrow\cd_{\F}$ is smooth.
\end{corollary}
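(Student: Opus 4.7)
The plan is to apply the standard tangent--obstruction criterion from abstract deformation theory: a morphism between two \emph{smooth} (i.e.\ unobstructed) deformation functors is itself smooth as soon as it induces a surjection on tangent spaces. Accordingly, I will verify three points: (a) that $\Def_{\F(\X)}$ is smooth, (b) that $\Def_{\F(\X)/\G}$ is smooth, and (c) that the tangent map $\H^{0}(\cn_{\F(\X)/\G})\rightarrow\H^{1}(\Theta_{\F(\X)})$ is surjective.

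Point (a) is immediate from \Cref{fano}(iii), which supplies $\H^{2}(\Theta_{\F(\X)})=0$. For (b) and (c) the relevant tool is the long exact sequence in cohomology associated to
\[
0\rightarrow\Theta_{\F(\X)}\rightarrow\Theta_{\G}\vert_{\F(\X)}\rightarrow\cn_{\F(\X)/\G}\rightarrow 0,
\]
already used in the proof of \Cref{fano}(iii). The obstruction space of $\Def_{\F(\X)/\G}$ is $\H^{1}(\cn_{\F(\X)/\G})$, which sits in this long exact sequence between $\H^{1}(\Theta_{\G}\vert_{\F(\X)})$ and $\H^{2}(\Theta_{\F(\X)})$; both of these vanish by \Cref{fano}(ii) and (iii), which settles (b). The same sequence places the tangent map immediately before $\H^{1}(\Theta_{\G}\vert_{\F(\X)})=0$, yielding (c).

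With these ingredients assembled, the corollary follows by invoking the abstract smoothness criterion. The step most deserving of scrutiny is not a computation but the verification that the correct criterion applies: one must identify the obstruction theory of the embedded deformation functor with $\H^{1}(\cn_{\F(\X)/\G})$ and recall that smoothness of a morphism of smooth deformation functors reduces to surjectivity on tangent spaces, both of which are standard (see, e.g., Sernesi). In practical terms the corollary is little more than a repackaging of the long exact sequence already deployed in the proof of \Cref{fano}(iii), so no real obstacle remains.
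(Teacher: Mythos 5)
Your argument is correct, but it is not the one the paper intends. The paper states this as a corollary with no proof because it is meant to follow from \Cref{fano}(ii) alone, exactly as the smoothness of $\Def_{\X/\P}\rightarrow\Def_{\X}$ was deduced from \Cref{cubic}(ii) earlier in Section 2: by the standard criterion for the forgetful morphism from embedded to abstract deformations (Sernesi, Prop.\ 3.2.9), the vanishing $\H^{1}(\Theta_{\G}\vert_{\F(\X)})=0$ already implies smoothness, with no need to know anything about obstructions to either functor separately. Your route instead verifies unobstructedness of both functors ($\H^{1}(\cn_{\F(\X)/\G})=0$ from the long exact sequence, $\H^{2}(\Theta_{\F(\X)})=0$ from \Cref{fano}(iii)) together with surjectivity on tangent spaces, and then invokes the general fact that a tangent-surjective morphism of unobstructed functors is smooth. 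This works, and all the cohomological inputs are indeed available, but two points deserve to be made explicit: first, the identification of the differential of the forgetful morphism with the connecting homomorphism $\H^{0}(\cn_{\F(\X)/\G})\rightarrow\H^{1}(\Theta_{\F(\X)})$ is itself the content of Sernesi's Prop.\ 3.2.9 (cited later in the paper), so you are not avoiding that reference; second, the abstract criterion you invoke requires the functors to be prorepresentable (or at least to have hulls, with a little extra care), which holds here because the local Hilbert functor $\Def_{\F(\X)/\G}$ is always prorepresentable and $\H^{0}(\Theta_{\F(\X)})=0$ by \Cref{fano}(iii). The trade-off is that your proof uses more of \Cref{fano} (parts (ii) and (iii)) where the paper's one-line argument uses only part (ii), but in exchange it makes the unobstructedness of the Hilbert scheme at $[\F(\X)]$ explicit, which is a mildly stronger conclusion.
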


We now apply \Cref{Wehl} to $\cf=\S^{3}\cs^{\vee}$ on $\S=\bg$. Let $\pi:\cz\rightarrow\H^{0}(\S^{3}\cs^{\vee})$ be as in \Cref{Wehl}, and put
$\phi=\sigma^{-1}\circ\pi$, where $\sigma$ is the isomorphism of \Cref{sigma}.
\begin{corollary}
The Kodaira-Spencer map $\kappa_{\phi, f}:\H^{0}(\co_{\bp}(3))\rightarrow\Ext^{1}(\Omega^{1}_{\F}, \co_{\F})$ is surjective.
\end{corollary}
In other words, the deformation of $\F$ induced by $\phi$ is complete at $f\in\H^{0}(\co_{\bp}(3))$. 
For $d\geqslant 6$, this is the content of Theorem 5.3 of \cite{Bor}.

For a point $x_{0}$ of $\X$ we let $\Sigma_{x_{0}}\subset\bg$ be the subscheme parametrising lines $\L$ containing $x_{0}$, 
and define $\F_{x_{0}}=\F\cap\Sigma_{x_{0}}$. We have $\F_{x_{0}}\neq\varnothing$ since $\X$ can be covered by lines;
in fact, 
\begin{equation}\label{eq:fdim}
\dim \F_{x_{0}}\geqslant d-4
\end{equation}
as $\Sigma_{x_{0}}$ is a projective space of dimension $d$ and $\F_{x_{0}}=\Z(\sigma_{f}\vert_{\Sigma_{x_{0}}})$.

\begin{lemma}\label{aut}
For $d\geqslant 5$ there is a canonical isomorphism
\begin{equation*}
\H^{0}(\Theta_{\X})\xrightarrow{\sim}\H^{0}(\Theta_{\F}).
\end{equation*}
\end{lemma}

\begin{proof} 
Consider the canonical morphism of automorphism groups
\begin{equation}\label{eq:aut}
\alpha:\Aut(\X)\rightarrow\Aut(\F).
\end{equation}
If $\phi:\X\rightarrow\X$ is an automorphism, then $\alpha(\phi):\F\rightarrow\F$ satisfies 
\begin{equation*}
\alpha(\phi)([\L])=[\phi(\L)]  
\end{equation*}
for every line $\L\subset\X$. We now show that $\alpha$ is injective; let $\phi$ be in the kernel of $\alpha$. By (\ref{eq:fdim}) $\X$ is covered by lines,
and so it suffices to show that $\phi\vert_{\L}:\L\rightarrow\phi(\L)=\L$ is the identity for every line $\L\subset\X$. 
For a point $x_{0}$ of $\L$, $\phi(x_{0})$ lies in $\phi(\M)=\M$ for every line $\M\subset\X$ containing $x_{0}$. 
Since the space $\F_{x_{0}}$ of such lines $\M$ has dimension $\geqslant 1$ by (\ref{eq:fdim}), this is possible only if $\phi(x_{0})=x_{0}$. 
Proposition 4 of \cite{Cha} shows that the image of (\ref{eq:aut}) is the subgroup 
$\Aut(\F, \co_{\F}(1))$ of automorphisms of $\F$ preserving the Plücker polarization. Since $\H^{1}(\co_{\F})=0$, 
$\H^{0}(\Theta_{\F})$ is the tangent space of $\Aut(\F, \co_{\F}(1))$ at the identity and so the differential of $\alpha$ at the identity gives the desired isomorphism
$\H^{0}(\Theta_{\X})\xrightarrow{\sim}\H^{0}(\Theta_{\F})$.
\end{proof}

\subsection{Proof of the main theorem}

\begin{theorem}\label{main}
Let $d\geqslant 5$. Then the differential 
\begin{equation*}
d\eta:\Ext^{1}(\Omega^{1}_{\X}, \co_{\X})\rightarrow\Ext^{1}(\Omega^{1}_{\F}, \co_{\F})
\end{equation*}
of $\eta$ is an isomorphism. If $\H^{0}(\Theta_{\X})=0$, then $\eta$ is an isomorphism.
\end{theorem}

\begin{proof}
Consider the diagram
\begin{equation*}
\begin{tikzcd}
\H^{0}(\cn_{\X/\bp}) \arrow[r] \arrow[d, swap, "d\eta_{\ch}"] & \Ext^{1}(\Omega^{1}_{\X}, \co_{\X})  \arrow[d, "d\eta"]  \\
\H^{0}(\cn_{\F/\bg}) \arrow[r] & \Ext^{1}(\Omega^{1}_{\F}, \co_{\F}) 
\end{tikzcd}
\end{equation*}
of differentials induced by (\ref{eq:comp}). By \Cref{basic} (i) and \Cref{fano} (ii), the differential
\begin{equation*}
\H^{0}(\cn_{\F/\bg})\rightarrow\Ext^{1}(\Omega^{1}_{\F}, \co_{\F})
\end{equation*} 
of the forgetful morphism is surjective. To show that $d\eta$ is surjective, it remains to observe that $d\eta_{\ch}$ is surjective. The diagram
\begin{equation}\label{eq:aux}
\begin{tikzcd}
\H^{0}(\co_{\bp}(3)) \arrow[r] \arrow[d, swap, "\sigma"] & \H^{0}(\cn_{\X/\bp})  \arrow[d, "d\eta_{\ch}"]  \\
\H^{0}(\S^{3}\cs^{\vee}) \arrow[r] & \H^{0}(\cn_{\F/\bg}),
\end{tikzcd}
\end{equation}
where the horizontal maps are given by restriction, is commutative; indeed, we have
\begin{equation*}
\F(\Z(f+\varepsilon g)/\bc[\varepsilon]) = \Z(\sigma_{f}+\varepsilon\sigma_{g})
\end{equation*}
by (\ref{eq:zero}). Since $\sigma$ is an isomorphism and the restriction map
\begin{equation*}
\H^{0}(\S^{3}\cs^{\vee})\rightarrow \H^{0}(\cn_{\F/\bg})
\end{equation*}
is surjective by \Cref{fano} (i), it follows that $d\eta_{\ch}$ is surjective. It now suffices to show that
\begin{equation*}
\dim \Ext^{1}(\Omega^{1}_{\X}, \co_{\X}) = \dim \Ext^{1}(\Omega^{1}_{\F}, \co_{\F}).
\end{equation*}
Consider the pair of exact sequences
\begin{align*}
0\rightarrow \H^{0}(\Theta_{\X}) \rightarrow \H^{0}(\Theta_{\bp}\vert_{\X}) \rightarrow \H^{0}(\cn_{\X / \bp}) \rightarrow \Ext^{1}(\Omega^{1}_{\X}, \co_{\X}) \rightarrow 0 \\
0\rightarrow \H^{0}(\Theta_{\F}) \rightarrow \H^{0}(\Theta_{\bg}\vert_{\F}) \rightarrow \H^{0}(\cn_{\F / \bg}) \rightarrow \Ext^{1}(\Omega^{1}_{\F}, \co_{\F}) \rightarrow 0
\end{align*}
associated to the conormal sequences of $\X\subset\bp$ and $\F\subset\bg$, respectively. By \Cref{aut} we have $h^{0}(\Theta_{\X}) = h^{0}(\Theta_{\F})$, while
\begin{equation*}
h^{0}(\Theta_{\bp}\vert_{\X}) = h^{0}(\Theta_{\bg}\vert_{\F}), \quad \mathrm{and} \quad h^{0}(\cn_{\X / \bp}) = h^{0}(\cn_{\F / \bg})
\end{equation*}
result from \Cref{cubic}, \Cref{fano}, and \Cref{bbw}.

If $\H^{0}(\Theta_{\X})=0$, then $\H^{0}(\Theta_{\F})=0$ by \Cref{aut}. Hence both $\cd_{\X}$ and $\cd_{\F}$ are pro-representable; 
since $\cd_{\X}$ is smooth and $d\eta$ bijective, it remains to apply \Cref{artinfunctors} (ii).
\end{proof}

\begin{corollary}
The morphism $\eta_{\ch}$ is an isomorphism, and $\eta$ is surjective.
\end{corollary}
\begin{proof}
This is a consequence of the proof of \Cref{main} rather than \Cref{main} itself. The proof shows that $d\eta_{\ch}$ can be identified with the isomorphism 
\begin{equation*}
\H^{0}(\co_{\bp}(3))/(f) \xrightarrow{\sim} \H^{0}(\S^{3}\cs^{\vee})/(\sigma_{f})
\end{equation*}
induced by $\sigma$. As $\ch_{\X/\bp}$ and $\ch_{\F/\bg}$ are pro-representable, and $\ch_{\X/\bp}$ smooth, $\eta_{\ch}$ is an isomorphism by \Cref{artinfunctors} (ii). 
Finally, $\eta$ is surjective by \Cref{artinfunctors} (i), as both $\cd_{\X}$ and $\cd_{\F}$ have a pro-representable hull by Schlessinger's theorem, 
\Cref{moduli} (i).
\end{proof}

\begin{remark}
Through the proof of \Cref{aut}, our proof of \Cref{main} depends on the results of \cite{Cha}. 
One could get rid of this dependence by establishing a commutative diagram
\begin{equation*}\label{eq:aux}
\begin{tikzcd}
\H^{0}(\Theta_{\bp}\vert_{\X}) \arrow[r, "df"] \arrow[d] & \H^{0}(\cn_{\X/\bp})  \arrow[d, "d\eta_{\ch}"]  \\
\H^{0}(\Theta_{\bg}\vert_{\F}) \arrow[r, swap, "d\sigma_{f}"] & \H^{0}(\cn_{\F/\bg}),
\end{tikzcd}
\end{equation*}
where the isomorphism on the left is induced by Chow's isomorphism $\Aut(\bp)\rightarrow\Aut(\bg)$, and \Cref{cubic} (ii), 
\Cref{fano} (ii). We expect $\eta$ to be an isomorphism without assuming the condition $\H^{0}(\Theta_{\X})=0$.
\end{remark}

\subsection{Further questions}

There are a number of follow-up questions. If $\X$ is a Lefschetz cubic with a node at $x_{0}$, 
then the singular locus of $\F$ can be identified with a smooth complete intersection $\Sigma\subset\bp_{d}$ of type $(2,3)$. 
The scheme $\F$ has rational singularities, and the blow-up
\begin{equation*}
\tilde{\F}\rightarrow\F
\end{equation*}
of $\F$ along $\Sigma$ provides a resolution of singularities of $\F$ \cite[Theorem 7.8]{ClG}. 
In such a situation, a general construction of Wahl \cite[Remark 1.4.1]{Wah} yields a blow-down morphism
\begin{equation*}
\beta:\cd_{\tilde{\F}}\rightarrow\cd_{\F}.
\end{equation*}
Here $\tilde{\F}$ is closely related to the Hilbert scheme of points $\Sigma^{[2]}$. 
By \cite[Theorem 36]{BFR}, one has a canonical isomorphism $\H^{1}(\Theta_{\Sigma})\xrightarrow{\sim}\H^{1}(\Theta_{\Sigma^{[2]}})$, 
which shows in particular that $\H^{1}(\Theta_{\Sigma^{[2]}})$ has dimension $\binom{d+2}{3}$; 
since this is also the dimension of $\Ext^{1}(\Omega^{1}_{\F}, \co_{\F})$, the morphism $\beta$ might be an isomorphism.

On the other hand, for smooth $\X$ it would be interesting to relate the non-commutative deformation theory (in the sense of \cite{Tod}) of $\X$ to the one of $\F$. 
A crucial role is played by the Hochschild cohomology
\begin{equation*}
\HH^{2}(\F) = \H^{0}(\Lambda^{2}\Theta_{\F})\oplus\H^{1}(\Theta_{\F}),
\end{equation*}
and the first step in this direction would be to compute the space $\H^{0}(\Lambda^{2}\Theta_{\F})$ of bivector fields on $\F$, and to exhibit Poisson structures on $\F$.

\subsection*{Acknowledgments}

We are indebted to P. Belmans, C. Borcea, F. Catanese, D. Huybrechts, S. Kleiman, and R. Thomas for helpful correspondence. 
The work on this paper was started when the author was at ETH Zurich, supported by the grant SNF-200020-182181; 
it was completed when the author was a Simons Foundation research fellow at the the Isaac Newton Institute for Mathematical Sciences,
funded by Simons Foundation grant 817244 and EPSRC grant EP/R014604/1.

{\small

}
\noindent 
Isaac Newton Institute, University of Cambridge\\
Departement Mathematik, ETH Zürich


\begin{thebibliography}{9}
\bibitem{AlK} A. B. Altman, S. L. Kleiman, Foundations of the theory of Fano schemes, Compos. Math. 34 (1977), 3-47.
\bibitem{BeD} A. Beauville, R. Donagi, La variété des droites d’une hypersurface cubique de dimension 4, C. R. Acad. Sci. Paris, Sér. I 301 (1985), 703-706.
\bibitem{BFR} P. Belmans, L. Fu, T. Raedschelders, Hilbert squares: derived categories and deformations,  Hilbert squares: derived categories and deformations. Sel. Math. New Ser. 25, 37 (2019).
\bibitem{Bot} R. Bott, Homogeneous Vector Bundles, Ann. Math. 66 (1957), 203-248.
\bibitem{Bor} C. Borcea, Deforming varieties of $k$-planes of projective complete intersections, Pac. J. Math. 143 (1990), 25-36.
\bibitem{Cat} F. Catanese et al., Varieties of Nodal surfaces, coding theory and Discriminants of cubic hypersurfaces, arXiv:2206.05492 (2022).
\bibitem{Cha} F. Charles, A remark on the Torelli theorem for cubic fourfolds, arXiv:1209.4509 (2012).
\bibitem{ClG} H. Clemens, P. A. Griffiths, The intermediate Jacobian of the cubic threefold, Ann. Math. 95 (1972), 281-356.
\bibitem{DeM} O. Debarre, L. Manivel, Sur la variété des espaces linéaires contenus dans une intersection complète, Math. Ann. 312 (1998), 549–574.
\bibitem{Fan} G. Fano, Sul sistema $\infty^{2}$ di rette contenuto in una varietà cubica generale dello spazio a quattro dimensioni, Atti R. Acc. Sci. Torino 39 (1904), 778-792.
\bibitem{Gro} A. Grothendieck, Technique de descente et théorèmes d’existence en géométrie algébriques. II. Le théorème d’existence
en théorie formelle des modules, Sém. N. Bourbaki 195 (1960), 369-390.
\bibitem{GrK} G-M. Greuel, U. Karras, Families of varieties with prescribed singularities, Comp. Math. 69 (1989), 83-110.
\bibitem{Huy} D. Huybrechts, The Geometry of Cubic Hypersurfaces, Cambridge University Press, Cambridge 2023. 
\bibitem{Jor} C. Jordan, Mémoire sur l'équivalence des formes, J. Ec. Polytech. 48 (1880), 112–150.
\bibitem{Kap} M. M. Kapranov, On the derived categories of coherent sheaves on some homogeneous spaces, Invent. Math. 92 (1988), 479-508.
\bibitem{KoS} K. Kodaira, D. C. Spencer, On Deformations of Complex Analytic Structures II, Ann. Math. 67 (1958), 403-466.
\bibitem{Laz} R. Laza, Moduli space of cubic fourfolds, J. Algebraic Geom. 18 (2009), 511-545.
\bibitem{Lib} A. S. Libgober, Numerical characteristics of systems of straight lines on complete intersections, Math. Notes 13 (1973), 51–56.
\bibitem{Mac} I. G. Macdonald, Symmetric Functions and Hall Polynomials, Oxford University Press, Oxford 1995.
\bibitem{Nam} Y. Namikawa, Smoothing Fano 3-folds, J. Alg. Geom. 6 (1997), 307-324.
\bibitem{Schle} M. Schlessinger, Functors of Artin Rings, Trans. Am. Math. Soc. 130 (1968), 208-222.
\bibitem{Schl} O. Schlömilch, Recherches sur les coefficients des facultés analytiques, J. Reine Angew. Math 44 (1852), 344-355.
\bibitem{Ser} E. Sernesi, Deformations of Algebraic Schemes, Springer, Berlin 2006.
\bibitem{Tod} Y. Toda, Deformations and Fourier-Mukai transforms, J. Diff. Geom. 81 (2009), 197-224.
\bibitem{Tyu} A. Tyurin, The Fano surface of a nonsingular cubic in $\bp^4$, Izv. Akad. Nauk SSSR Ser. Mat. 34 (1970), 1200–1208.
\bibitem{Var} A. N. Varchenko, Semicontinuity of the spectrum and an upper bound
for the number of singular points of the projective hypersurface, Dokl. Akad. Nauk SSSR 270 (1983), 1294-1297.
\bibitem{Wah} J. M. Wahl, Equisingular Deformations of Normal Surface Singularities I, Ann. Math. 104 (1976), 325-356.
\bibitem{Weh} J. Wehler, Deformation of varieties defined by sections in homogeneous vector bundles, Math. Ann. 268 (1984), 519–532.
\bibitem{Weh2} J. Wehler, Deformation of complete intersections with singularities, Math. Z. 179 (1982), 473-491.

\end{thebibliography}
\end{document}